\newtheorem{theorem}{Theorem}
\theoremstyle{definition}
\theoremstyle{corollary}
\newtheorem{corollary}[theorem]{Corollary}
\theoremstyle{conjecture}
\newtheorem{conjecture}[theorem]{Conjecture}
\theoremstyle{remark}
\theoremstyle{proposition}
\def\R{\mathbb{R}}
\begin{document}
\parskip10pt
\parindent15pt
\baselineskip15pt    

\title[Gallai-Ramsey Theory for Hypergraphs]{Constructive Methods in Gallai-Ramsey Theory for Hypergraphs}

\author[M. Budden]{Mark Budden}\thanks{The authors would like to thank Prof. Stanislaw Radziszowski of the Rochester Institute of Technology for pointing us to a construction by Prof. Brendan McKay \cite{M} that implies the lower bound $R^2(K_{5}^{(3)}-e;3)\geq 14$. }
\address{Department of Mathematics and Computer Science \\
Western Carolina University \\
Cullowhee, NC 28723 USA}
\email{mrbudden@email.wcu.edu}

\author[J. Hiller]{Josh Hiller}
\address{Department of Mathematics and Computer Science \\ 
Adelphi University \\
Garden City, NY 11530-0701}
\email{johiller@adelphi.edu}

\author[A. Penland]{Andrew Penland}
\address{Department of Mathematics and Computer Science \\
Western Carolina University \\
Cullowhee, NC 28723 USA}
\email{adpenland@email.wcu.edu}

\subjclass[2010]{Primary  05C65, 05C55; Secondary 05D10}
\keywords{Gallai colorings, Ramsey numbers, lexicographic product}

\begin{abstract} Much recent progress in hypergraph Ramsey theory has focused on constructions that lead to lower bounds for the corresponding Ramsey numbers.  In this paper, we consider applications of these results to Gallai colorings.  That is, we focus on the Ramsey numbers resulting from only considering $t$-colorings of the hyperedges of complete $r$-uniform hypergraphs in which no rainbow $K_{r+1}^{(r)}$-subhypergraphs exists.  We also provide new constructions which imply improved lower bounds for many $3$ and $4$-uniform Ramsey numbers and $3$ and $4$-uniform Gallai-Ramsey numbers.\end{abstract}

\maketitle

\section{Introduction}

The study of Ramsey numbers for $r$-uniform hypergraphs has seen many advances in recent years.  Although only one classical hypergraph Ramsey number has been evaluated at this time ($R(K_4^{(3)}, K_4^{(3)};3)=13$, \cite{MR}), there has been considerable progress made in the cases of hypergraph paths, cycles, and trees (e.g., \cite{BHR2}, \cite{BP}, \cite{GR}, \cite{Jack}, \cite{Loh}, \cite{LP}, \cite{LPR}, and \cite{OS}) as well as general constructions (e.g., \cite{BBH}, \cite{BHLS}, and \cite{BHR}).  The goal of the present paper is to investigate some of these results within the framework of Gallai colorings, and to provide constructions which yield improved lower bounds for many Gallai-Ramsey hypergraph numbers, especially in the $3$-uniform case.  

To be precise, recall that an $r$-uniform hypergraph $H=(V,E)$ consists of a nonempty set $V$ of vertices and a set $E$ of hyperedges (unordered $r$-tuples of distinct elements in $V$).  We also assume that the elements in $E$ are distinct.   When $r=2$, this definition agrees with that of a simple graph.  The complete $r$-uniform hypergraph of order $n$, in which every $r$-tuple of elements in $V$ form a hyperedge, is denoted by $K_n^{(r)}$.  A $t$-coloring of an $r$-uniform hypergraph $H$ is an assignment of $t$ colors to the hyperedges of $H$.  A $t$-coloring can be identified with a function  $$\mathcal{C}:E(H)\longrightarrow \{1, 2, \dots , t\}.$$  Letting $H_1, H_2, \dots , H_t$ be $r$-uniform hypergraphs, the hypergraph Ramsey number $R(H_1, H_2, \dots , H_t;r)$ is defined to be the least natural number $p$ such that every $t$-coloring of the hyperedges of $K_p^{(r)}$ contains a monochromatic subhypergraph isomorphic to $H_i$ for some color $i$.  Whenever $H=H_1=H_2=\cdots =H_t$, we denote the corresponding Ramsey number by $R^t(H;r)$.

One direction in which the Ramsey theory for graphs was generalized was by restricting the $t$-colorings one considers.  Specifically, we say that a graph $G$ is rainbow if every edge of $G$ is assigned a unique color.  A Gallai $t$-coloring of the complete graph $K_n^{(2)}$ is a $t$-coloring that does not contain any rainbow triangles (i.e., rainbow $K_3^{(2)}$-subgraphs).  Gallai colorings were developed based on the partitions introduced by Gallai \cite{G} in 1967 and were studied within the framework of Ramsey theory in \cite{BB}, \cite{CG}, \cite{FGJM}, \cite{FGP}, \cite{FMO}, and \cite{GS}.  In light of this work, define the Gallai-Ramsey number $gr(H_1, H_2, \dots , H_t;r)$ to be the least natural number $p$ such that every Gallai $t$-coloring (lacking rainbow $K_{r+1}^{(r)}$-subhypergraphs) contains a monochromatic $H_i$ in some color $i$.  As with Ramsey numbers, when $H=H_1=H_2=\cdots = H_t$, we write $gr^t(H;r)$ for the corresponding Gallai-Ramsey number.  Since $K_{r+1}^{(r)}$ contains $r+1$ hyperedges, we find that $$gr(H_1, H_2, \dots , H_t;r)=R(H_1, H_2, \dots , H_t;r)$$ whenever $t<r+1$.  When $t\ge r+1$, we have only the inequality $$gr(H_1, H_2, \dots , H_t;r)\le R(H_1, H_, \dots , H_t;r).$$  While the above references demonstrate that Gallai-Ramsey numbers have been well-developed when $r=2$, to our knowledge, this concept has not been developed when $r>2$.  

In Section \ref{exist}, we review some recent constructions in hypergraph Ramsey theory.  Our goal is to show how these constructions can be applied to Gallai colorings, resulting in lower bounds for hypergraph Gallai-Ramsey numbers.  In Section \ref{new}, we turn our attention to a new $3$ and $4$-uniform constructions that provide lower bounds for certain Gallai-Ramsey numbers.  We conclude with Section \ref{conclude}, listing some new explicit lower bounds implied by our work and offering a conjecture concerning uniformities greater than $4$.


\section{A review of constructions which preserve Gallai Colorings} \label{exist}

In this section, we focus on recent work in hypergraph Ramsey theory which preserves Gallai colorings. In essence we show that many existing theorems from hypergraph Ramsey theory have direct analogues in hypergraph Gallai-Ramsey theory. 

\subsection{Graph Lifting}

In recent work \cite{BHLS}, the authors examined conditions under which $2$-colorings of graphs can be ``lifted'' to $2$-colorings of $3$-uniform hypergraphs.  The lifting described in \cite{BHLS} preserved complements, allowing Ramsey number results for graphs to imply lower bounds for Ramsey numbers of certain $3$-uniform hypergraphs.  Coupling this lifting with the Stepping-Up Lemma credited to Erd\H{o}s and Hajnal (see \cite{GRS}), one can then obtain lower bounds for hypergraph Ramsey numbers of any uniformity.  Let us now outline the framework necessary to precisely describe these results.

Let $\mathcal{G}_2$ denote the set of graphs of order at least $3$ and $\mathcal{G}_3$ denote the set of $3$-uniform hypergraphs of order at least $3$.  Then the lifting $\varphi :\mathcal{G}_2\longrightarrow \mathcal{G}_3$ is defined to map a graph $G$ to the $3$-unform hypergraph $\varphi (G)$ with vertex set $V(\varphi (G))=V(G)$ and hyperedge set consisting of all $3$-tuples $xyz$ such that the subgraph of $G$ induced by $\{x,y,z\}$ contains exactly one edge or exactly $3$ edges.  One of the main consequences of the structure of this lifting is the following theorem.
\begin{theorem}[Theorem 9 in \cite{BHLS}] \label{lifting} For all $s_1, s_2 \ge 3$, $$R(K_{2s_1-1}^{(3)}-e , K_{2s_2-1}^{(3)}-e;3)\ge R(K_{s_1}, K_{s_2};2).$$
\end{theorem}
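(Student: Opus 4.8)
The plan is to use the lifting $\varphi$ to transport a critical $2$-coloring of a complete graph to a $2$-coloring of a complete $3$-uniform hypergraph. Set $n = R(K_{s_1}, K_{s_2};2) - 1$ and fix a graph $G$ on $n$ vertices whose edges (thought of as red) contain no $K_{s_1}$ and whose non-edges (blue, i.e. the edges of the complement $\overline{G}$) contain no $K_{s_2}$. Color a hyperedge $xyz$ of $K_n^{(3)}$ red if $xyz\in\varphi(G)$ --- equivalently if $\{x,y,z\}$ spans exactly one or exactly three edges of $G$ --- and blue otherwise. The first thing to record is that the blue hyperedges are precisely the triples spanning zero or two edges of $G$, equivalently one or three edges of $\overline{G}$; hence the blue hypergraph is exactly $\varphi(\overline{G})$. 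So $\varphi$ ``preserves complements,'' and it suffices to prove the red half of the statement and then apply it verbatim with $G$ replaced by $\overline{G}$ and $s_1$ by $s_2$.

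Next I would isolate the combinatorial core as a lemma: for any vertex set $U$, the hypergraph $\varphi(G)$ contains the complete hypergraph $K_{|U|}^{(3)}$ on $U$ if and only if $G[U]$ is a disjoint union of at most two cliques. Indeed, a triple spanning zero edges is an independent triple and a triple spanning exactly two edges is an induced $P_3$; forbidding both says exactly that $G[U]$ is $P_3$-free (so each component is a clique) and has at most two components (else three components yield an independent triple). This is proved by these elementary observations alone.

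Now suppose, for contradiction, that the red hypergraph contains a $K_{2s_1-1}^{(3)}-e$ on a vertex set $W$ with $|W| = 2s_1-1$ and unique non-red triple $f = \{a,b,c\}$. Deleting one vertex of $f$, say $c$, leaves $2s_1-2$ vertices that induce $K_{2s_1-2}^{(3)}$ in $\varphi(G)$, so by the lemma $G[W\setminus\{c\}] = D_1 \sqcup D_2$ with $D_1, D_2$ cliques and $|D_1| + |D_2| = 2s_1-2$; take $|D_1| \ge |D_2|$, so $|D_1| \ge s_1-1$. If $|D_1|\ge s_1$ we already have a red $K_{s_1}$ in $G$, contradicting the choice of $G$; so assume $|D_1| = |D_2| = s_1-1$, and reintroduce $c$. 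Every triple $\{c,x,y\}$ with $\{x,y\}\subseteq W\setminus\{c\}$ and $\{x,y\}\ne\{a,b\}$ is red, so $c$ cannot be adjacent to exactly one endpoint of an edge of $D_1$ (or of $D_2$) unless that edge is $ab$; hence $c$ is adjacent to all, or to none, of each $D_i$, except possibly for $a,b$ when both lie in the same $D_i$. Chasing this through the finitely many positions of $a$ and $b$ relative to $D_1, D_2$ --- and, where needed, re-running the deletion argument with $a$ or $b$ removed instead of $c$, which again forces a union of at most two cliques and caps the number of components at two --- one finds in every case that either $c$ extends one of $D_1, D_2$ to a clique of size $s_1$, or the union-of-two-cliques structure is violated outright. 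Either way we contradict the choice of $G$, ruling out a red $K_{2s_1-1}^{(3)}-e$; the complement-preservation observation then rules out a blue $K_{2s_2-1}^{(3)}-e$, so the coloring witnesses $n < R(K_{2s_1-1}^{(3)}-e, K_{2s_2-1}^{(3)}-e;3)$, as desired.

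The only real obstacle I anticipate is this last case analysis around two equal cliques and the bad triple $f$, together with verifying the small cases $s_1 = 3$ or $s_2 = 3$ directly (there one may simply inspect the relevant critical colorings, e.g.\ $C_5$); everything else is bookkeeping about $\varphi$ and the lemma.
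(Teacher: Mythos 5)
Your argument is correct and is exactly the approach this theorem rests on: the paper itself states the result as a citation (Theorem 9 of \cite{BHLS}) without reproving it, but the surrounding discussion makes clear that the intended proof is precisely your route --- lift a critical $2$-coloring via $\varphi$, use the fact that $\varphi$ sends complements to complements, and characterize complete subhypergraphs of $\varphi(G)$ as vertex sets inducing a disjoint union of at most two cliques. Your deletion-plus-case-analysis around the single missing hyperedge is sound (and in fact the degenerate case $s_1=3$ closes under the same constraints without needing to inspect specific critical colorings such as $C_5$), so no gap remains.
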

\noindent Throughout, we use the notation $K_n^{(r)}-e$ to denote a complete $r$-uniform hypergraph of order $n$ with a single hyperedge removed.  From Theorem \ref{lifting}, it follows that $$R(K_{2s_1-1}^{(3)}, K_{2s_2-1}^{(3)};3)\ge R(K_{s_1}^{(2)}, K_{s_2}^{(2)};2),$$ and this was used to prove new lower bounds for certain non-diagonal $3$-uniform hypergraph Ramsey numbers (see the consequences at the end of \cite{BHLS} for specifics).

Unfortunately, the lifting does not immediately extend to $t$-colorings of complete graphs when $t\ge 3$, as the lifting of a rainbow triangle is not well-defined.  We can avoid this issue by lifting only Gallai $t$-colorings.  
In the $3$-uniform setting, Gallai $t$-colorings of $K_p^{(3)}$ are those that lack rainbow $K_4^{(3)}$-subhypergraphs. 
The following theorem implies that Gallai $t$-colorings of graphs lift to Gallai $t$-colorings of $3$-uniform hypergraphs under $\varphi$.

\begin{theorem}[Erd\H{o}s, Simonovits, S\'os \cite{ESS}] Every Gallai coloring of $K_p^{(2)}$ ($p>2$) contains at most $p-1$ colors.\end{theorem}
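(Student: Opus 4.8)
The plan is to prove the statement by induction on $p$, using the structure theorem of Gallai for edge-colorings of complete graphs that avoid rainbow triangles. Recall that Gallai's theorem asserts that in any Gallai coloring of $K_p^{(2)}$ with $p \ge 2$, there is a partition of the vertex set $V = V_1 \cup V_2 \cup \cdots \cup V_k$ into $k \ge 2$ nonempty blocks (a \emph{Gallai partition}) such that between any two distinct blocks $V_i$ and $V_j$ all edges receive a single color, and the ``reduced'' coloring on $K_k^{(2)}$ obtained by contracting each block to a point uses at most two colors in total. This reduced coloring is the key leverage: only two colors appear among all the ``between-block'' edges.

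The base case $p = 3$ is immediate, since a Gallai coloring of $K_3^{(2)}$ is by definition not rainbow, hence uses at most $2 = p - 1$ colors. For the inductive step, assume the bound holds for all complete graphs of order strictly less than $p$, and let $\mathcal{C}$ be a Gallai coloring of $K_p^{(2)}$. Fix a Gallai partition $V_1 \cup \cdots \cup V_k$ with $k \ge 2$ blocks. The colors used by $\mathcal{C}$ fall into two groups: the colors appearing on edges within some block $V_i$, and the colors appearing on edges between blocks. The between-block edges contribute at most $2$ colors by the reduced-coloring property. For the within-block edges, the restriction of $\mathcal{C}$ to each $K_{|V_i|}^{(2)}$ is again a Gallai coloring, so if $|V_i| \ge 3$ it uses at most $|V_i| - 1$ colors by the inductive hypothesis, while if $|V_i| \le 2$ it uses at most $1 \le |V_i| - 1$ colors trivially (with the convention that a block of size $1$ contributes no within-block edges). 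Summing over the blocks and adding the $2$ between-block colors, the total number of colors is at most
$$\sum_{i=1}^{k} (|V_i| - 1) + 2 = p - k + 2 \le p,$$
using $k \ge 2$. This is slightly weaker than claimed, so a small refinement is needed: when $k \ge 3$ we already get $p - k + 2 \le p - 1$ and we are done, and when $k = 2$ one observes that a Gallai partition into exactly two blocks uses only a \emph{single} between-block color (there is only one pair of blocks), improving the additive constant from $2$ to $1$ and yielding the bound $p - 2 + 1 = p - 1$.

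The main obstacle is not the counting but securing the structural input: the argument rests entirely on Gallai's partition theorem and on the fact that the reduced coloring uses at most two colors, so the proof is really a reduction to that classical result. A secondary point requiring care is the bookkeeping for small blocks, since the naive inductive bound $|V_i| - 1$ is only valid once $|V_i| \ge 3$; handling singletons and pairs separately (as above) resolves this cleanly. One should also verify the edge case where some block equals all of $V$ minus a single vertex, but this is subsumed by the general count since that configuration still has $k \ge 2$.
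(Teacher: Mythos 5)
The paper does not actually prove this statement: it is quoted from Erd\H{o}s, Simonovits, and S\'os \cite{ESS} purely as an input to the lifting argument (only the case $p=4$, i.e.\ at most three colors on a Gallai-colored $K_4^{(2)}$, is used), so there is no in-paper proof to compare against. Your argument is correct as written. The Gallai partition theorem does guarantee, for any rainbow-triangle-free coloring of $K_p^{(2)}$ with $p\ge 2$, a partition into $k\ge 2$ nonempty blocks with a single color between each pair of blocks and at most two colors among all between-block edges; the induction on block sizes is sound once singletons and pairs are handled separately as you do (a singleton block contributes $0=|V_i|-1$ colors, not ``at most $1$,'' but your parenthetical convention covers this); and the case split into $k\ge 3$, where $p-k+2\le p-1$ directly, versus $k=2$, where only one between-block color can occur, correctly closes the gap left by the naive count. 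The main caveat is that your proof is only as elementary as its structural input: Gallai's partition theorem is itself a substantial classical result, arguably deeper than the color-count bound being derived from it. The original anti-Ramsey argument of \cite{ESS} avoids it entirely: if at least $p$ colors are used, choose one edge of each color to obtain a spanning rainbow subgraph with at least $p$ edges, hence containing a cycle; a shortest rainbow cycle must be a triangle, since otherwise a chord between two vertices at distance two on the cycle either creates a rainbow triangle or yields a shorter rainbow cycle. Either route is a legitimate proof; yours trades elementarity for a cleaner reduction to known structure theory.
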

\noindent In particular, every Gallai coloring of $K_4^{(2)}$ contains at most $3$ colors.  It follows that every $K_4^{(3)}$-subhypergraph of a hypergraph in the image of $\varphi$ is spanned by hyperedges using at most $3$ colors.  So, every hypergraph coloring in the image of $\varphi$ is Gallai.

The following variation of Theorem 9 in \cite{BHLS} follows immediately when we restrict ourselves to Gallai colorings. 

\begin{theorem} \label{lift} For all $s_i\ge 3$, 
$$gr(K_{2s_1-1}^{(3)}-e, K_{2s_2-1}^{(3)}-e, \dots , K_{2s_t-1}^{(3)}-e; 3)\ge gr(K_{s_1}^{(2)}, K_{s_2}^{(2)}, \dots , K_{s_t}^{(2)};2).$$
\end{theorem}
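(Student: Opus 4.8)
\emph{Proof idea.}
The plan is to produce an explicit extremal coloring. Put $n = gr(K_{s_1}^{(2)},\dots,K_{s_t}^{(2)};2) - 1$; it suffices to exhibit a Gallai $t$-coloring of $K_n^{(3)}$ with no monochromatic $K_{2s_i-1}^{(3)}-e$ in color $i$, since such a coloring forces $gr(K_{2s_1-1}^{(3)}-e,\dots,K_{2s_t-1}^{(3)}-e;3)\ge n+1$. I would obtain this coloring by lifting an extremal Gallai $t$-coloring of $K_n^{(2)}$, imitating the construction behind Theorem \ref{lifting} but carrying all $t$ color classes along at once. This is exactly the point where the Gallai hypothesis is used: $\varphi$ is a map on uncolored graphs, and to promote it to a map on colorings one must be able to assign a hyperedge $xyz$ a color determined by the triangle on $\{x,y,z\}$, which can be done consistently only when that triangle is not rainbow.

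Concretely, first fix a Gallai $t$-coloring $\mathcal C$ of $K_n^{(2)}$ with no monochromatic $K_{s_i}^{(2)}$ in color $i$ (which exists by the definition of $n$), and let $G_i$ be the spanning subgraph of the edges of color $i$. Define a $t$-coloring $\mathcal C'$ of $K_n^{(3)}$ by letting $\mathcal C'(xyz)$ be the color that occurs an odd number of times among the three edges of the triangle on $\{x,y,z\}$ in $\mathcal C$. This is well defined because $\mathcal C$ is Gallai: on a non-rainbow triangle the edge-color multiplicities are either $(3)$ or $(2,1)$, so exactly one color occurs an odd number of times. By construction $\mathcal C'(xyz)=i$ exactly when $G_i$ induces one or three edges on $\{x,y,z\}$, so the color-$i$ hyperedges of $\mathcal C'$ form precisely the hyperedge set of $\varphi(G_i)$; for $t=2$ this recovers the complement-preserving lifting of \cite{BHLS}.

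It then remains to check two things. First, $\mathcal C'$ has no rainbow $K_4^{(3)}$: on any four vertices, the restriction of $\mathcal C$ to the induced $K_4^{(2)}$ is still a Gallai coloring, hence uses at most three colors by the theorem of Erd\H{o}s, Simonovits, and S\'os; since each of the four hyperedges on those vertices is assigned a color that appears on that $K_4^{(2)}$, the four hyperedges use at most three colors in all, and cannot be rainbow. Second, $\mathcal C'$ has no monochromatic $K_{2s_i-1}^{(3)}-e$ in color $i$: by the identification above, a copy on a vertex set $W$ would be a copy of $K_{2s_i-1}^{(3)}-e$ sitting inside $\varphi(G_i)$, and the analysis in the proof of Theorem \ref{lifting} (given in \cite{BHLS}), which determines exactly which graphs $G$ satisfy $\varphi(G)\supseteq K_{2s-1}^{(3)}-e$, then yields a $K_{s_i}^{(2)}$ inside $G_i[W]$, contradicting the choice of $\mathcal C$. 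With both checks done, $\mathcal C'$ is the required coloring.

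I do not expect a genuine obstacle: the only ingredient not already contained in \cite{BHLS} is the first check, and it is immediate once the Erd\H{o}s--Simonovits--S\'os bound is applied four vertices at a time. The substantive combinatorics all sits in the second check and is inherited verbatim from \cite{BHLS}; for the record, the mechanism there is that if a $(2s-1)$-vertex graph has every $3$-subset but one spanning an odd number of edges, then its complement is extremely rigid---on the vertices outside the exceptional triple it is a complete bipartite graph---so an independent set of size $s$, that is, a $K_s$ in the original graph, can be extracted, the single excepted triple providing exactly the slack needed to reach $s$ rather than $s-1$. Reproving that lemma is the only place real work would be needed in a fully self-contained treatment.
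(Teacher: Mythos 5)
Your proposal is correct and follows essentially the same route as the paper: lift an extremal Gallai $t$-coloring of $K_n^{(2)}$ by assigning each hyperedge the unique color occurring an odd number of times on its triangle (i.e., applying $\varphi$ color class by color class), use the Erd\H{o}s--Simonovits--S\'os bound on each $K_4^{(2)}$ to see the lifted coloring is Gallai, and invoke the analysis behind Theorem \ref{lifting} to rule out monochromatic copies of $K_{2s_i-1}^{(3)}-e$. The paper compresses all of this into the remark that the result ``follows immediately'' once one observes the image of $\varphi$ is Gallai-colored; your write-up just makes the same argument explicit.
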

\noindent In particular, it also follows that $$gr(K_{2s_1-1}^{(3)}, K_{2s_2-1}^{(3)}, \dots , K_{2s_t-1}^{(3)}; 3)\ge gr(K_{s_1}^{(2)}, K_{s_2}^{(2)}, \dots K_{s_t}^{(2)};2).$$  

A well-known result of Chung and Graham \cite{CG} is equivalent to the following theorem. 
\begin{theorem}[Chung and Graham \cite{CG}] \label{CandG} For all $t\ge 1$, $$gr^t(K_3^{(2)};2)=\left\{ \begin{array}{ll} 5^{t/2}+1 & \mbox{if $t$ is even} \\ 2\cdot 5^{(t-1)/2}+1 & \mbox{if $t$ is odd.}\end{array}\right.$$
\end{theorem}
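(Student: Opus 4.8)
The plan is to prove the two bounds separately. For the lower bound, I would build an explicit Gallai $t$-coloring of $K_{n}^{(2)}$ with no monochromatic triangle, where $n = 5^{t/2}$ if $t$ is even and $n = 2\cdot 5^{(t-1)/2}$ if $t$ is odd. The key building block is the unique $2$-coloring of $K_5^{(2)}$ with no monochromatic triangle (the pentagon and its complementary pentagram), which witnesses $R^2(K_3^{(2)};2) = 6$, together with the trivial $1$-coloring of $K_2^{(2)}$. The construction proceeds by iterated substitution (blow-up): given a Gallai $t'$-coloring of $K_m^{(2)}$ with no monochromatic triangle, replace each vertex by a copy of a good $2$-coloring of $K_5$ using two brand-new colors, coloring all edges between two distinct copies with the color that the original edge received. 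This yields a Gallai $(t'+2)$-coloring of $K_{5m}^{(2)}$ with no monochromatic triangle: a triangle lies either inside one copy (handled by the $K_5$-coloring), or meets at least two copies in which case at least two of its edges lie between copies and, by the substitution rule, a monochromatic such triangle would project to a monochromatic triangle or edge in the smaller coloring. One must also check that no rainbow triangle is created, which follows from the same case analysis since within a copy only two colors appear and across copies the ``old'' colors are reused consistently. Starting from $K_1$ (for $t$ even, after $t/2$ doublings by the $K_5$ substitution) or from the $1$-colored $K_2$ (for $t$ odd), and adding one vertex at the end, gives the claimed lower bounds.

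For the upper bound, the argument uses the Gallai partition structure. By the theorem of Gallai (as formulated by \cite{G}, and used in \cite{ESS}), any Gallai coloring of a complete graph $K_n^{(2)}$ with $n \ge 2$ admits a nontrivial partition of the vertex set into parts $V_1, \dots, V_k$ with $k \ge 2$ such that between any two parts only one color is used, and the ``reduced'' coloring on $K_k^{(2)}$ uses at most two colors. Let $f(t)$ denote the right-hand side of the claimed formula and suppose, for contradiction, that some Gallai $t$-coloring of $K_{f(t)}^{(2)}$ has no monochromatic triangle. Apply the Gallai partition; since the reduced coloring on $K_k$ uses at most $2$ colors and has no monochromatic triangle, we get $k \le 5$. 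One then shows that at most two distinct colors can appear ``between parts'' without forcing a monochromatic triangle in a clever way — in fact the reduced $K_k$ must itself be a good $2$-coloring, so $k \le 5$, and moreover the two colors used between parts cannot be reused inside the parts without creating a monochromatic triangle spanning a part and an edge to another part. Hence each part is internally colored by a Gallai coloring using at most $t-2$ of the remaining colors (when $k \ge 2$ forces at least two colors to appear between parts and a careful count shows exactly two can be ``saved''), giving the recursion $f(t) \le 5\big(f(t-2) - 1\big) + 1$ together with the base cases $f(1) = 3$ (since $gr^1(K_3;2) = R(K_3;2)$ with only one color is just $3$) and $f(2) = 6 = R^2(K_3^{(2)};2)$. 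Solving this recursion yields exactly the stated closed form.

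The main obstacle is the bookkeeping in the upper bound: one must argue carefully that when the reduced coloring on the Gallai partition uses the maximum of two colors and has the maximum of five parts, the colors appearing between parts are genuinely unavailable inside the parts — otherwise a triangle using two cross edges and one internal edge would be monochromatic — and simultaneously that the largest part is forced to be small enough. Handling the case $k = 2$ (where the reduced coloring uses only one color) versus $3 \le k \le 5$ requires a short separate analysis to confirm that neither case beats the recursion, and one must check the parity base cases directly. Once the recursion $f(t) \le 5(f(t-2)-1)+1$ is established and matched by the lower-bound construction, induction on $t$ (splitting into even and odd $t$) closes the proof.
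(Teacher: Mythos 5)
The paper does not actually prove this theorem: it is quoted as a known result of Chung and Graham \cite{CG} and used as a black box, so there is no internal proof to compare against. Judged on its own, your sketch reconstructs the standard argument from the literature and is sound in outline. The lower bound via iterated substitution of the pentagon/pentagram $2$-coloring of $K_5^{(2)}$ (two fresh colors per substitution step, cross edges inheriting the base color) is correct and essentially complete; the only slip is the phrase ``adding one vertex at the end'' --- the $+1$ comes from the definition of the Ramsey number (a good coloring of $K_n$ shows the number exceeds $n$), not from augmenting the construction. The upper bound via the Gallai partition (reduced graph $2$-colored, hence $k\le R(K_3^{(2)},K_3^{(2)};2)-1=5$ parts, and any color on a reduced edge incident to a part is forbidden inside that part) is the right mechanism, but as you acknowledge the case analysis is the entire content: a part incident to only one reduced color loses only one color, so the single recursion $f(t)\le 5(f(t-2)-1)+1$ must be supplemented by bounds such as $n\le 2(f(t-1)-1)$ when $k=2$ (and the intermediate $k=3,4$ configurations), each of which must be checked against the closed form --- for odd $t$ the $k=2$ case is already tight, so it cannot be waved away. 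With that bookkeeping carried out, your argument yields the stated formula; as written it is a faithful sketch of the Chung--Graham/Gy\'arf\'as--Simonyi proof rather than a complete one.
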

\noindent Combining this result with Theorem \ref{lift}, it follows that $$gr^t(K_{5}^{(3)}-e;3)\ge \left\{ \begin{array}{ll} 5^{t/2}+1 & \mbox{if $t$ is even} \\ 2\cdot 5^{(t-1)/2}+1 & \mbox{if $t$ is odd.}\end{array}\right.$$
This result provides a nice extension of Theorem \ref{CandG}, however, it does not provide a tight lower bound.  In Section \ref{conclude}, we will show that $$gr^4(K_5^{(3)}-e;3)\ge gr(K_4^{(3)}, K_4^{(3)}, K_5^{(3)}-e, K_5^{(3)}-e;3)\geq 170,$$ which is stronger than the bound implied by our extension of Theorem \ref{CandG}.

\subsection{Other Constructive Lower Bounds}

In this section, we show how some general constructions can be used to obtain lower bounds for $r$-uniform Gallai-Ramsey numbers.  Recall that the (weak) chromatic number $\chi (H)$ of an $r$-uniform hypergraph $H$ is the minimum number of colors needed to color the vertices of $H$ such that no hyperedge is monochromatic.  When $r=2$, this definition coincides with that of the usual chromatic number.  The chromatic index $s(H)$ is the smallest cardinality of a color class among all proper vertex colorings of $H$ using $\chi (H)$ colors.  

In \cite{BP}, a general lower bound for Ramsey numbers due to Burr \cite{Burr} was extended to the setting of hypergraphs for the purpose of defining an $n$-good $r$-uniform hypergraph.
Using a similar method of proof, we offer the following theorem concerning $t$-colored $r$-uniform Gallai-Ramsey numbers.

\begin{theorem} \label{genburr}
Let $H_1, H_2, \dots , H_t$ be connected $r$-uniform hypergraphs such that $gr(H_1, H_2,\dots , H_{t-1};r) \ge s(H_t)$.  Then $$gr(H_1, H_2, \dots ,  H_t;r)\ge (\chi (H_t)-1)(gr(H_1, H_2, \dots , H_{t-1};r)-1)+s(H_t).$$
\end{theorem}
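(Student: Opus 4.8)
The plan is to mimic Burr's original construction for graphs, adapted to the hypergraph Gallai setting. Write $g := gr(H_1, \dots, H_{t-1}; r)$ and $k := \chi(H_t)$. The goal is to build a Gallai $t$-coloring of $K_N^{(r)}$, where $N = (k-1)(g-1) + s(H_t) - 1$, that contains no monochromatic $H_i$ in color $i$ for any $i$. First I would partition the vertex set into $k-1$ blocks $B_1, \dots, B_{k-2}$ of size $g-1$ together with one more block $B_{k-1}$ of size $s(H_t) - 1$; by hypothesis $g - 1 \ge s(H_t) - 1$, so every block has size at most $g - 1$. On each block $B_j$, place a Gallai $(t-1)$-coloring of the complete $r$-uniform hypergraph induced on $B_j$ using colors $1, \dots, t-1$ that avoids a monochromatic $H_i$ in color $i$ for $i \le t-1$; such a coloring exists precisely because each block has fewer than $g$ vertices. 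Every remaining hyperedge — that is, every hyperedge meeting at least two distinct blocks — gets assigned color $t$.

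Next I would check that no monochromatic $H_i$ appears. For $i \le t - 1$: since $H_i$ is connected, any copy of $H_i$ in color $i$ must lie entirely within a single block (a color-$i$ hyperedge never crosses blocks), and within a block we chose the coloring to avoid such a copy. For color $t$: a copy of $H_t$ in color $t$ would use only crossing hyperedges, so no hyperedge of it lies inside a single block; equivalently, the partition of $V(H_t)$ induced by the blocks is a proper vertex coloring of $H_t$ with at most $k - 1 = \chi(H_t) - 1$ nonempty classes, which is impossible — and even allowing the partition to use all $k-1$ available blocks is impossible since $\chi(H_t) = k$. (One must be slightly careful: a priori a copy of $H_t$ could occupy $\le k-1$ blocks; but any proper coloring needs $\ge k$ classes, contradiction.) This is where the hypothesis $s(H_t) \le g$ also quietly matters, because it guarantees the small block $B_{k-1}$ can actually be filled with a valid $(t-1)$-coloring.

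Then I would verify that the global $t$-coloring is Gallai, i.e. contains no rainbow $K_{r+1}^{(r)}$. Take any $r+1$ vertices $v_0, \dots, v_r$. If they all lie in one block, the induced coloring is a restriction of the Gallai $(t-1)$-coloring on that block, hence has no rainbow $K_{r+1}^{(r)}$. If they are spread across two or more blocks, then at least two of the $r+1$ hyperedges among them are crossing hyperedges — indeed, whenever the vertices occupy $\ge 2$ blocks there are at least two $r$-subsets that are not contained in a single block — so two hyperedges share color $t$ and the $K_{r+1}^{(r)}$ is not rainbow. (I would state the elementary counting fact that if $r+1$ points are partitioned into at least two nonempty parts, at least two of the $\binom{r+1}{r} = r+1$ many $r$-subsets meet more than one part; this holds since an $r$-subset lies in one part only if it avoids all but one part, and there are at most... — in fact if some part has $m \ge 1$ points with $1 \le m \le r$, the $r$-subsets inside a single part number at most one, so the crossing ones number $\ge r \ge 2$.)

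The main obstacle I anticipate is the rainbow-freeness verification in the mixed case together with the connectedness argument for colors $1,\dots,t-1$: one has to argue cleanly that a connected $r$-uniform hypergraph all of whose hyperedges are crossing would force a proper vertex coloring with too few colors, and handle the edge case where $H_t$ might conceivably be embedded using hyperedges that each cross but whose vertex set meets only $k-1$ blocks. Everything else is bookkeeping: sizes of blocks, existence of the block colorings from the definition of $gr$, and the combinatorial lemma about $r$-subsets straddling a partition.
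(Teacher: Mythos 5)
Your overall strategy is the same Burr-type blow-up that the paper uses, and two of the three verifications are fine: the connectedness argument confining a color-$i$ copy of $H_i$ ($i\le t-1$) to a single block, and the count showing that among the $r+1$ many $r$-subsets of any $(r+1)$-set meeting two or more blocks, at most one lies inside a single block, so at least two hyperedges receive color $t$ and no rainbow $K_{r+1}^{(r)}$ occurs. Both match the paper's proof.

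There is, however, a genuine gap in your block count and, as a consequence, in the color-$t$ argument. To reach $N=(k-1)(g-1)+s(H_t)-1$ vertices you need $k-1=\chi(H_t)-1$ blocks of size $g-1$ \emph{plus} the small block of size $s(H_t)-1$, i.e.\ $\chi(H_t)$ blocks in total; your partition into ``$k-1$ blocks'' (and your later reference to ``all $k-1$ available blocks'') is one block short, and with only $k-1$ blocks the construction has $(k-2)(g-1)+s(H_t)-1$ vertices and proves a strictly weaker bound. Once the count is corrected, a color-$t$ copy of $H_t$ may meet all $\chi(H_t)$ blocks, so the induced block-partition is a proper vertex coloring with exactly $\chi(H_t)$ classes and your ``fewer than $\chi(H_t)$ classes'' contradiction evaporates. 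The missing ingredient is the defining property of the chromatic index: in any proper vertex coloring of $H_t$ with $\chi(H_t)$ colors every class has cardinality at least $s(H_t)$, whereas the class contained in the small block has at most $s(H_t)-1$ vertices --- a contradiction; if instead the copy misses the small block, it is $(\chi(H_t)-1)$-colorable, which is the contradiction you do give. You assign $s(H_t)$ only the role of making the small block admissible via $s(H_t)\le g$; that hypothesis is indeed needed for that purpose, but the essential use of $s(H_t)$ is the one above, and it is exactly how the paper closes the argument.
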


\begin{proof}
Let $p=gr(H_1, H_2, \dots , H_{t-1};r)$ and fix a Gallai $(t-1)$-coloring of $K_{p-1}^{(r)}$ that lacks a monochromatic $H_i$ in color $i$, for all $i\in \{1, 2, \dots , t-1\}$.  Consider $\chi(H_{t})-1$ copies of this $K_{p-1}^{(r)}$, along with a single Gallai $(t-1)$-colored $K_{s(H_{t})-1}^{(r)}$, formed by removing $p-s(H_t)$ vertices arbitrarily from the $K_{p-1}^{(r)}$. The result is a Gallai $(t-1)$-colored $$(\chi(H_{t})-1)K_{p-1}^{(r)}\cup K_{s(H_{t})-1}^{(r)},$$ whose vertices we then interconnect with color $t$.   It is clear that no rainbow $K_{r+1}^{(r)}$-subhypergraphs exist entirely within any of the Gallai $(t-1)$-colored complete hypergraphs.  Any $K_{r+1}^{(r)}$-subhypergraphs that include vertices from two distinct complete subhypergraphs necessary contain at least two hyperedges of the same color.  Hence the resulting $t$-coloring of $K_{(\chi (H_t)-1)(p-1)+s(H_t)-1}^{(r)}$ is a Gallai coloring.  By construction, this coloring does not contain a monochromatic copy of $H_i$ in  any color $i\in \{1, 2, \dots ,t-1\}$. It remains to be argued that we have not produced a copy of $H_t$ in color $t$.  In the case where $s(H_t)=1$, a proper vertex coloring of any subhypergraph spanned by hyperedges in color $t$ can be achieved by coloring the vertices according to which copy of $K_{p-1}^{(r)}$ they are in.  So, any such subhypergraph is $(\chi (H_t)-1)$-properly colorable and $H_t$ is not such a subhypergraph.  In the case where $s(H_t)>1$, coloring the vertices according to which complete subhypergraph they are in produces a proper vertex coloring of every subhypergraph spanned by hyperedges using color $t$.  Since such a proper coloring using $\chi (H_t)$ colors contains a color class with cardinality $s(H_t)-1$, we find that $H_t$ is not such a subhypergraph.  In either case, we find that $$gr(H_1, H_2,  \dots , H_t;r)>(\chi(H_t)-1)(p-1)+s(H_t)-1,$$ completing the proof of the theorem. 
\end{proof}

While the lower bounds implied by Theorem \ref{genburr} are not exceptionally strong when considering only complete hypergraphs, they may be useful when considering minimally connected hypergraphs (e.g., see \cite{BHP}).  This theorem may also be useful in extending the notion of $n$-good hypergraphs (see \cite{BP}) to Gallai colorings.

Another constructive bound for hypergraph Ramsey numbers was given in \cite{BBH}.  In this case, properties of the lexicographic product of hypergraphs were exploited.  If $H_1$ and $H_2$ are $r$-uniform hypergraphs with $r>2$, then the lexicographic product $H_1[H_2]$ is the hypergraph defined to have vertex set $V(H_1)\times V(H_2)$ and hyperedge set $E(H_1[H_2])$ given by $$\left\{ (a_1, b_1)(a_2, b_2)\cdots (a_r, b_r) \ \left| \ a_1a_2\cdots a_r\in E(H_1) \ \mbox{or} \ \left( \begin{array}{c} a_1=a_2=\cdots a_r \ \mbox{and} \\ b_1 b_2\cdots b_r\in E(H_2)\end{array}\right)\right.\right\}.$$
Of course, the lexicographic product of hypergraphs is not commutative.  The usefulness of this product follows from the following properties of the clique number, denoted by $\omega$, which were proved in Theorems 3 and 4 of \cite{BBH}:
\begin{equation} \label{clique1} \omega (H_1[H_2])=\max(\omega (H_1), \omega (H_2))\end{equation}
and
\begin{equation} \label{clique2} \omega (\overline{H_1[H_2]})=\omega (\overline{H_1})\omega (\overline{H_2}).\end{equation}

\begin{theorem}\label{lex}
If $r\ge 3$ and $p_i,q_i\ge r-1$ for all $1\le i\le t$, then $$gr(K_{\max(p_1, q_1)+1}^{(r)}, K_{\max(p_2, q_2)+1}^{(r)}, \dots , K_{\max(p_{t-1}, q_{t-1})+1}^{(r)}, K_{p_tq_t+1}^{(r)};r)\ge \qquad \qquad \qquad$$ $$\qquad \quad(gr(K_{p_1+1}^{(r)}, K_{p_2+1}^{(r)}, \dots , K_{p_t+1}^{(r)};r)-1)(gr(K_{q_1+1}^{(r)}, K_{q_2+1}^{(r)}, \dots , K_{q_t+1}^{(r)};r)-1)+1.$$
\end{theorem}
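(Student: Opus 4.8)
The plan is to mimic the standard lexicographic-product construction for Ramsey lower bounds, but to track the Gallai condition throughout. Let $m = gr(K_{p_1+1}^{(r)}, \dots, K_{p_t+1}^{(r)}; r)$ and $n = gr(K_{q_1+1}^{(r)}, \dots, K_{q_t+1}^{(r)}; r)$. By definition there is a Gallai $t$-coloring $\mathcal{C}_1$ of $K_{m-1}^{(r)}$ with no monochromatic $K_{p_i+1}^{(r)}$ in color $i$, and a Gallai $t$-coloring $\mathcal{C}_2$ of $K_{n-1}^{(r)}$ with no monochromatic $K_{q_i+1}^{(r)}$ in color $i$. I would build a coloring of $K_{(m-1)(n-1)}^{(r)}$ on the vertex set $V(K_{m-1}^{(r)}) \times V(K_{n-1}^{(r)})$ as follows: a hyperedge $(a_1,b_1)\cdots(a_r,b_r)$ gets the color $\mathcal{C}_1(a_1\cdots a_r)$ if $a_1,\dots,a_r$ are not all equal, and gets the color $\mathcal{C}_2(b_1\cdots b_r)$ if $a_1 = \cdots = a_r$ (so the $b_j$ are forced to be distinct). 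This is exactly the edge-coloring underlying the lexicographic product structure, with each ``layer'' $\{a\}\times V(K_{n-1}^{(r)})$ carrying a copy of $\mathcal{C}_2$ and the ``quotient'' carrying $\mathcal{C}_1$.

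Next I would verify there is no monochromatic $K_{p_iq_i+1}^{(r)}$ in color $i$ (for $i \le t-1$ the target is $K_{\max(p_i,q_i)+1}^{(r)}$, which I handle the same way with the product bound replaced by the max bound). Suppose $S$ is a set of vertices spanning a monochromatic clique in color $i$. Projecting $S$ to the first coordinate, the image induces a clique in color $i$ under $\mathcal{C}_1$ together with possibly several ``fat'' fibers; within each fat fiber the second coordinates induce a clique in color $i$ under $\mathcal{C}_2$. This is precisely the content of the clique-number identity $\omega(\overline{H_1[H_2]}) = \omega(\overline{H_1})\,\omega(\overline{H_2})$ from \eqref{clique2} applied color-class by color-class: the color-$i$ subhypergraph of the product is contained in the lexicographic product of the color-$i$ subhypergraphs of $\mathcal{C}_1$ and $\mathcal{C}_2$, so its clique number is at most $p_i q_i$ (resp.\ $\max(p_i,q_i)$ for the first $t-1$ colors). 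Hence no monochromatic clique of the forbidden size appears. This step also uses the hypothesis $p_i, q_i \ge r-1$, which guarantees the cliques in question actually contain hyperedges so the clique-number bookkeeping is meaningful.

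The step I expect to be the main obstacle — and the genuinely new ingredient relative to \cite{BBH} — is checking that the constructed coloring is itself a Gallai coloring, i.e.\ contains no rainbow $K_{r+1}^{(r)}$. Take any $r+1$ vertices $(a_0,b_0), \dots, (a_r,b_r)$. If the $a_j$ are all equal, all $r+1$ hyperedges are colored by $\mathcal{C}_2$ restricted to an $(r+1)$-subset, which is non-rainbow since $\mathcal{C}_2$ is Gallai. If the $a_j$ take at least two distinct values, I would argue by a pigeonhole/counting argument on the multiset $\{a_0,\dots,a_r\}$: either two distinct values each occur, forcing two hyperedges whose vertex sets differ only inside fibers to both be colored by $\mathcal{C}_1$ on the same underlying $r$-subset (hence equal colors), or else at most one value repeats, in which case deleting the repeated coordinate (or one of a pair) leaves $r$ distinct $a_j$'s and the $r+1$ hyperedges project onto $K_{r+1}^{(r)}$'s worth of $\mathcal{C}_1$-colored edges, again non-rainbow because $\mathcal{C}_1$ is Gallai. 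Making this case analysis clean — enumerating the possible ``shapes'' of the multiset $\{a_0,\dots,a_r\}$ and showing each forces a repeated color — is the crux; once it is done, combining the three paragraphs yields $gr(\dots; r) > (m-1)(n-1)$, which is the claimed bound.
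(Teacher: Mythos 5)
Your overall strategy is the same as the paper's: color $V(K_{m-1}^{(r)})\times V(K_{n-1}^{(r)})$ so that the color classes are governed by the lexicographic product, bound cliques via the identities (\ref{clique1}) and (\ref{clique2}), and check the Gallai condition by cases on the first coordinates. But as written your construction has a genuine gap: you only define the color of a hyperedge $(a_1,b_1)\cdots (a_r,b_r)$ when the $a_j$ are all distinct (via $\mathcal{C}_1$) or all equal (via $\mathcal{C}_2$). When the multiset $\{a_1,\dots ,a_r\}$ has a repeated value but is not constant, $a_1\cdots a_r$ is not a hyperedge of $K_{m-1}^{(r)}$, so ``$\mathcal{C}_1(a_1\cdots a_r)$'' is undefined, and since $r\ge 3$ such mixed hyperedges always occur. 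The paper assigns every such hyperedge color $t$, and this choice is not cosmetic: it is precisely why color $t$ is the one that must only avoid $K_{p_tq_t+1}^{(r)}$ rather than $K_{\max (p_t,q_t)+1}^{(r)}$. With that assignment the color-$t$ class is the complement of $\bigcup_{j\le t-1}H_j\big[\bigcup_{j\le t-1}H_j'\big]$, and the bound $p_tq_t$ comes from (\ref{clique2}), the formula for \emph{complements}. Your write-up instead argues that each color class is contained in a lexicographic product of the corresponding color classes ``so its clique number is at most $p_iq_i$''; but containment in $H_i[H_i']$ gives the bound $\max (p_i,q_i)$ by (\ref{clique1}), while the product $p_iq_i$ is the clique number of the complement. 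So you have simultaneously left the mixed hyperedges uncolored and conflated the two clique identities, and as a result there is no color class for which the $p_tq_t$ bound is actually needed or justified.

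The same omission is what makes your Gallai case analysis murkier than necessary. Once all mixed hyperedges receive color $t$, the middle case is immediate: if at least two but not all of the first coordinates of $r+1$ chosen vertices coincide, the induced $K_{r+1}^{(r)}$ contains at least two mixed hyperedges, both of color $t$, hence is not rainbow. Your fallback (two hyperedges differing only inside a fiber project to the same $r$-set and so receive equal $\mathcal{C}_1$-colors) does dispose of the shape with a single coinciding pair, but for shapes with more repetition the projections are not $r$-sets of distinct vertices, and the argument cannot be completed until you have said what color those hyperedges carry. The two extreme cases (all first coordinates distinct, all equal) are handled exactly as you describe.
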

\begin{proof}
The proof  of Theorem \ref{lex} is structured the same as the analogous result in \cite{BBH}, but it is also necessary to justify why the construction does not contain  any rainbow $K_{r+1}^{(r)}$-subhypergraphs.  Let $$m:=gr(K_{p_1+1}^{(r)}, K_{p_2+1}^{(r)}, \dots , K_{p_t+1}^{(r)};r) \quad \mbox{and} \quad n:=gr(K_{q_1+1}^{(r)}, K_{q_2+1}^{(r)}, \dots , K_{q_t+1}^{(r)};r).$$  Then there exists a Gallai $t$-coloring $\mathcal{C}_1$ of $K_{m-1}^{(r)}$ that lacks a monochromatic clique of order $p_i+1$ in color $i$ for every $i\in \{1, 2, \dots , t\}$ and there exists a Gallai $t$-coloring $\mathcal{C}_2$ of $K_{n-1}^{(r)}$ that lacks a monochromatic clique of order $q_i+1$ in color $i$ for every $i\in \{ 1, 2, \dots , t\}$.  We now construct a Gallai $t$-coloring $\mathcal{C}$ of $K_{(m-1)(n-1)}^{(r)}$ by first identifying the vertices with those in $V(K_{m-1}^{(r)})\times V(K_{n-1}^{(r)})$.  Then assign the hyperedge $$(a_1, b_1)(a_2, b_2)\cdots (a_r, b_r)$$ the color $j\in \{ 1, 2, \dots , t\}$ if either $a_1a_2\cdots a_r\in E(K_{m-1}^{(r)})$ has color $j$ in $\mathcal{C}_1$ or if $a_1=a_2=\cdots a_r$ and $b_1b_2\cdots b_r\in E(K_{n-1}^{(r)})$ has color $j$ in $\mathcal{C}_2$.  All remaining hyperedges are assigned color $t$.  For each $j$, define $H_j$ and $H_j'$ to be the subhypergraphs of $K_{m-1}^{(r)}$ and $K_{n-1}^{(r)}$ spanned by the hyperedges of color $j$, respectively.  Whenever $1\le j\le t-1$, the subhypergraph of $K_{(m-1)(n-1)}^{(r)}$ spanned by the hyperedges of color $j$ is isomorphic to $H_j[H_j']$.  Such a subhypergraph does not contain any cliques of order $\max(p_j, q_j)+1$ in color $j\in S:=\{ 1, 2, \dots , t-1\}$ by (\ref{clique1}).  The subhypergraph spanned by color $t$ in $K_{(m-1)(n-1)}^{(r)}$ is isomorphic to the complement of $$\mathop{\bigcup}\limits_{j\in S} H_j\bigg[\mathop{\bigcup}\limits_{j\in S} H_j'\bigg],$$ and hence, lacks a clique of order $p_tq_t+1$ in color $t$ by (\ref{clique2}).  The theorem will now follow from showing that no rainbow $K_{r+1}^{(r)}$ exists in our construction.  Consider a set $$W=\{(a_1, b_1), (a_2, b_2), \dots , (a_{r+1}, b_{r+1})\}$$ of distinct vertices.  If these vertices all have distinct $a_i$, then the subhypergraph induced by $W$ is colored according to $\mathcal{C}_1$, and hence, is Gallai colored.  If at least two $a_i$ (but not all) are equal, then at least two of the hyperedges in the subhypergraph induced by $W$ receive color $t$.  Finally, if all $a_i$ are equal, then the subhypergraph induced by $W$ is colored according to $\mathcal{C}_2$.  From these cases, we find that we have produced a Gallai $t$-coloring of $K_{(m-1)(n-1)}^{(r)}$.
\end{proof}

Using the observation that $$gr(K_{n_1}^{(r)}, K_{n_2}^{(r)}, \dots , K_{n_i}^{(r)};r)=gr(K_{n_1}^{(r)}, K_{n_2}^{(r)}, \dots , K_{n_i}^{(r)}, \underbrace{K_r^{(r)}, K_{r}^{(r)}, \dots, K_r^{(r)}}_{t-i \ terms};r)$$ and $$gr(K_{n_{i+1}}^{(r)}, K_{n_{i+2}}^{(r)},\dots, K_{n_t}^{(r)};r)=gr(\underbrace{K_r^{(r)}, K_r^{(r)}, \dots , K_r^{(r)}}_{i\ terms}, K_{n_{i+1}}^{(r)}, K_{n_{i+2}}^{(r)},\dots, K_{n_t}^{(r)};r),$$ we obtain the following corollary, which gives a Gallai-coloring version of a theorem of Xu, Xie, Exoo, and Radziszowski \cite{XXER}.  

\begin{corollary} \label{ExooRad}
If $r\ge 3$ and each $n_j \ge r$, then \begin{align}gr(K_{n_1}^{(r)}, K_{n_2}^{(r)}, \dots , K_{n_{t-1}}^{(r)}, K_{(n_t-1)(r-1)+1}^{(r)};r)&\notag\\ \ge (gr(K_{n_1}^{(r)}, K_{n_2}^{(r)}, \dots , K_{n_i}^{(r)};r)-1)&(gr(K_{n_{i+1}}^{(r)}, K_{n_{i+2}}^{(r)},\dots, K_{n_t}^{(r)};r)-1)+1.\notag \end{align}
\end{corollary}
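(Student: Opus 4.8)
The plan is to derive Corollary~\ref{ExooRad} as a direct specialization of Theorem~\ref{lex}, using the two displayed identities immediately preceding the corollary to pad the two lists of hypergraphs with trivial $K_r^{(r)}$-entries until each has length $t$. Fix the index $i$. First I would set, for $1\le j\le i$, the parameters $p_j:=n_j-1$ and $q_j:=r-1$; for $i+1\le j\le t-1$, $p_j:=r-1$ and $q_j:=n_j-1$; and for the last coordinate $p_t:=r-1$ and $q_t:=n_t-1$. Since each $n_j\ge r$ we have $n_j-1\ge r-1$, so every $p_j$ and every $q_j$ belongs to $\{n_j-1,\ r-1\}$ and in particular satisfies $p_j,q_j\ge r-1$; this is exactly the hypothesis required to apply Theorem~\ref{lex}.

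Next I would read off the left-hand side of Theorem~\ref{lex} for this choice of parameters. For each $j$ with $1\le j\le t-1$, one of $p_j,q_j$ equals $n_j-1$ and the other equals $r-1\le n_j-1$, so $\max(p_j,q_j)+1=n_j$; and in the last coordinate $p_tq_t+1=(r-1)(n_t-1)+1=(n_t-1)(r-1)+1$. Hence the left-hand side of Theorem~\ref{lex} is precisely $gr(K_{n_1}^{(r)},K_{n_2}^{(r)},\dots,K_{n_{t-1}}^{(r)},K_{(n_t-1)(r-1)+1}^{(r)};r)$, the quantity on the left of the corollary.

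Finally I would identify the two factors on the right-hand side of Theorem~\ref{lex}. By construction $gr(K_{p_1+1}^{(r)},\dots,K_{p_t+1}^{(r)};r)$ equals $gr(K_{n_1}^{(r)},\dots,K_{n_i}^{(r)},K_r^{(r)},\dots,K_r^{(r)};r)$ with $t-i$ trailing copies of $K_r^{(r)}$, which collapses to $gr(K_{n_1}^{(r)},\dots,K_{n_i}^{(r)};r)$ by the first of the two displayed identities preceding the corollary; symmetrically $gr(K_{q_1+1}^{(r)},\dots,K_{q_t+1}^{(r)};r)$ equals $gr(K_r^{(r)},\dots,K_r^{(r)},K_{n_{i+1}}^{(r)},\dots,K_{n_t}^{(r)};r)$ with $i$ leading copies of $K_r^{(r)}$, which collapses to $gr(K_{n_{i+1}}^{(r)},\dots,K_{n_t}^{(r)};r)$ by the second identity. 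Substituting these two equalities into the conclusion of Theorem~\ref{lex} gives the claimed inequality.

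There is essentially no new mathematical content to supply here: the substance lies entirely in Theorem~\ref{lex} together with the elementary fact underlying the padding identities, namely that a color class required only to contain a single hyperedge $K_r^{(r)}$ is never an obstruction. The only point that requires care --- and the main, if mild, obstacle --- is the index bookkeeping: keeping straight the role of $i$, of the two coordinate blocks $1\le j\le i$ and $i+1\le j\le t-1$, and of the last coordinate, and checking the degenerate cases (for instance $i=t-1$, or an empty block) so that the padding identities are invoked with exactly the right number of $K_r^{(r)}$-terms.
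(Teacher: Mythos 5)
Your proposal is correct and matches the paper's intended derivation exactly: the paper obtains the corollary from Theorem~\ref{lex} precisely by the padding identities displayed just before it, and your choice of parameters $p_j,q_j\in\{n_j-1,\,r-1\}$ is the right specialization. The paper does not even write out these details, so your sketch is if anything more explicit than the original.
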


\section{New Constructions for $3$-Uniform and $4$-Uniform Hypergraph Gallai-Ramsey Numbers} \label{new}

In this section, we present new constructions which yield improved lower bounds for several multicolored Gallai-Ramsey numbers. 


\begin{theorem}\label{3construct}  Let $H_1, H_2, \dots , H_t$ be $3$-uniform hypergraphs with orders greater than $3$ such that each $H_i$ is isomorphic to either a complete hypergraph or a complete hypergraph with a single hyperedge removed.  If $t\ge 1$, then  
  $$ gr(H_1, H_2, \dots , H_t, K_4^{(3)}, K_4^{(3)};3)\ge (gr(H_1, H_2, \dots , H_t;3)-1)^2+1.$$
\end{theorem}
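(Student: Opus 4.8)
The plan is to mimic the construction in the proof of Theorem \ref{lex}, taking a lexicographic-type product of an extremal Gallai $t$-coloring with itself, but now using the two extra colors $t+1$ and $t+2$ (each forbidding $K_4^{(3)}$) to handle the hyperedges that the lexicographic product of hypergraphs leaves uncolored. Let $m := gr(H_1,\dots,H_t;3)$ and fix a Gallai $t$-coloring $\mathcal{C}_0$ of $K_{m-1}^{(3)}$ with no monochromatic $H_i$ in color $i$. Identify the vertex set of $K_{(m-1)^2}^{(3)}$ with $V\times V$ where $V = V(K_{m-1}^{(3)})$. Color a hyperedge $(a_1,b_1)(a_2,b_2)(a_3,b_3)$ as follows: if $a_1,a_2,a_3$ are all distinct, give it the color of $a_1a_2a_3$ under $\mathcal{C}_0$; if $a_1=a_2=a_3$, give it the color of $b_1b_2b_3$ under $\mathcal{C}_0$ (in colors $1,\dots,t$); and if exactly two of the $a_i$ coincide, give it one of the two new colors. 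The key new idea is how to split these ``mixed'' hyperedges between colors $t+1$ and $t+2$ so that neither color class contains a $K_4^{(3)}$.

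The natural rule is: a mixed hyperedge $(a_1,b_1)(a_2,b_2)(a_3,b_3)$ with (say) $a_i = a_j \ne a_k$ receives color $t+1$ if the repeated $a$-value is ``small'' relative to the distinct one and color $t+2$ otherwise — more precisely, fix a linear order on $V$, and assign color $t+1$ when the repeated vertex precedes the singleton vertex, color $t+2$ when it follows it. I would then check that a putative monochromatic $K_4^{(3)}$ in color $t+1$ on a vertex set $\{(a_1,b_1),\dots,(a_4,b_4)\}$ is impossible: each of its four triples is mixed, so among any three of the four $a$-values, exactly two agree; a short combinatorial case analysis on the multiset $\{a_1,a_2,a_3,a_4\}$ (it must be, e.g., of the form $\{x,x,y,z\}$, $\{x,x,y,y\}$, or $\{x,x,x,y\}$ — the all-distinct and all-equal cases give non-mixed triples) shows that in every case at least one triple is forced into color $t+2$ by the ordering rule, a contradiction; the case $\{x,x,y,y\}$ contains the non-mixed triple need not arise since the triple $\{(x,\cdot),(y,\cdot),(y,\cdot)\}$ is still mixed, so this needs the ordering argument applied to both pairs. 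This is the step I expect to be the main obstacle: getting a splitting rule for which \emph{every} configuration of four $a$-values yielding four mixed triples is ruled out, and verifying it cleanly.

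Granting that no monochromatic $K_4^{(3)}$ occurs in colors $t+1$ or $t+2$, the remaining verifications follow the template of Theorem \ref{lex}. For $1\le j\le t$, the subhypergraph in color $j$ is contained in $H_j[H_j']$ where $H_j$, $H_j'$ are the color-$j$ classes of $\mathcal{C}_0$, so by \eqref{clique1} its clique number equals $\max(\omega(H_j),\omega(H_j'))$; since $\mathcal{C}_0$ has no monochromatic $H_j$ in color $j$ and $H_j$ is a clique or a clique-minus-an-edge, one argues (exactly as one shows a clique-minus-edge has no large clique) that color $j$ contains no copy of $H_j$. Finally I would verify the Gallai condition: for any four vertices $(a_1,b_1),\dots,(a_4,b_4)$, if all $a_i$ are distinct the induced coloring comes from $\mathcal{C}_0$ and is Gallai; if all $a_i$ are equal it likewise comes from $\mathcal{C}_0$; and if the $a_i$ take exactly two or exactly three distinct values, then among the four induced triples at least two are mixed — and in fact one checks at least two share the same new color, or more simply that whenever two of the four triples are both mixed they cannot be rainbow-completing because a rainbow $K_4^{(3)}$ needs four distinct colors while the two new colors only supply two — so the induced $K_4^{(3)}$ is not rainbow. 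Combining, the constructed coloring is a Gallai $(t+2)$-coloring of $K_{(m-1)^2}^{(3)}$ with no monochromatic $H_i$ in color $i$ ($1\le i\le t$) and no monochromatic $K_4^{(3)}$ in colors $t+1, t+2$, which yields $gr(H_1,\dots,H_t,K_4^{(3)},K_4^{(3)};3) > (m-1)^2$, as desired.
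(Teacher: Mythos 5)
Your construction is the same as the paper's: partition the $(m-1)^2$ vertices into $m-1$ fibers each carrying the extremal Gallai $t$-coloring, color the transversal triples by the base coloring on the fiber indices, and split the mixed triples between colors $t+1$ and $t+2$ according to whether the repeated fiber index precedes or follows the singleton index; your case analysis showing that a four-set supporting four mixed triples must have index multiset $\{x,x,y,y\}$, hence splits $2$--$2$ between the two new colors, is exactly the paper's justification that neither new color contains a monochromatic $K_4^{(3)}$. Two small cautions: in the Gallai check, the remark that two mixed triples ``cannot be rainbow-completing because the two new colors only supply two'' does not by itself exclude a rainbow $K_4^{(3)}$ when the index multiset is $\{x,x,y,z\}$ (the two mixed triples may receive $t+1$ and $t+2$) --- what actually saves you is that the two transversal triples both inherit the color of $xyz$ in the base coloring; and the avoidance of $H_i=K_n^{(3)}-e$ should be finished by counting (any $n$-set whose index pattern is repeated but not constant contains at least two hyperedges in colors $t+1$, $t+2$, one too many for a clique minus one hyperedge) rather than by the clique-number identity \eqref{clique1}, which only handles complete targets.
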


\begin{proof}
 Let $m=gr(H_1, H_2, \dots , H_t;3)$, then there exists a Gallai $t$-coloring of a $K_{m-1}^{(3)}$ that avoids a monochromatic $H_i$ in color $i$ for all $i\in\{1,2,...,t\}$.   We union $m-1$ disjoint copies of this $K_{m-1}^{(3)}$ and label them $A_1,A_2,...,A_m$.   Within each $A_i$, label the vertices $$(1,i),(2,i),...,(m-1,i)$$ so that the color of the hyperedge $(j,i)(k,i)(\ell ,i)$ is independent of the choice of $i$.  For a hyperedge $e=abc$, where not all vertices come from the same same copy of $A_i$, color $e$ according to the following rules.
\begin{enumerate}
    \item Suppose that no two vertices in $e=abc$ come from the same $A_i$.  Let $a\in V(A_j)$, $b\in V(A_k)$, and $c\in V(A_\ell)$.  Then color $e$ according to the color of the hyperedge $(j, i)(k,i)(\ell, i)$ in $A_i$.  
    \item If $a,b\in V(A_j)$ and $c\in V(A_k),$ and $j<k$, then assign color $t+1$ to $e$.
    \item If $a,b\in V(A_j)$ and $c\in V(A_k),$ and $j>k$, then assign color $t+2$ to $e$.
\end{enumerate}
We leave it as an exercise to confirm that these rules produce a Gallai $(t+2)$-coloring of $K_{(m-1)^2}^{(3)}$.  We can also confirm that no monochromatic $K_4^{(3)}$ is spanned by hyperedges in colors $t+1$ and $t+2$ as any such subhypergraph contains exactly two vertices in some $A_i$ and two vertices in some $A_j$, with $i\ne j$.  The subhypergraph then contains exactly two hyperedges in color $t+1$ and two hyperedges in color $t+2$.
It remains to be shown that this hypergraph coloring lacks a monochromatic copy of $H_i$ in color $i$, for all $i\in \{1,2,\dots , t\}$.  Let $S$ be a subset of vertices with cardinality $n\ge 4$.  We must consider several cases.  \\
\noindent \underline{Case 1:} If all vertices in $S$ come from distinct $A_i$, then the hyperedges in the subhypergraph induced by $S$ are colored according to rule $(1)$ above, preventing a monochromatic copy of $H_i$ from being produced in color $i$ for all $i\in\{ 1,2,\dots , t\}$. \\
\noindent \underline{Case 2:} If $S\subseteq V(A_i)$ for some $i$, then the subhypergraph induced by $S$ does not contain a monochromatic $H_i$ in color $i$, for any $i\in \{1,2,\dots , t\}$, by the assumption made when constructing $A_i$.\\
\noindent \underline{Case 3:} Suppose that $S\subseteq V(A_i)\cup V(A_j)$ with $i\ne j$ and $S\cap V(A_i)\ne \emptyset$ and $S\cap V(A_j)\ne \emptyset$.   We have assumed that $S$ has cardinality at least $4$.  In the case where it is equal to $4$ and  $|S\cap V(A_i)|=2=|S\cap V(A_j)|$, we find that the subhypergraph induced by $S$ contains two hyperedges in color $t+1$ and two hyperedges in color $t+2$.  Otherwise, one of $S\cap V(A_i)$ and $S\cap V(A_j)$ has order at least $3$ and the other has order at least $1$.  The subhypergraph induced by $S$ then contains at least $3$ hyperedges in colors $t+1$ and $t+2$.  In both of these subcases, no monochromatic copy of $H_i$ is produced in color $i$, for any $i\in \{1, 2, \dots , t\}$. \\
\noindent \underline{Case 4:} Suppose that $S$ contains vertices from at least $3$ distinct $A_i$, one of which contains at least two vertices from $S$ (so that we are not in Case 1).  Without loss of generality, suppose that $a,b,c,d \in S$ satisfy $a,b\in A_i$ and $c,d\not\in A_i$ for some $i$.  The hyperedges $abc$ and $abd$ receive colors $t+1$ or $t+2$, preventing the subhypergraph induced by $S$ from containing a monochromatic copy of $H_i$ in color $i$, for all $i\in \{1,2,\dots , t\}$.  \\
\noindent This completes the proof.
\end{proof}

\begin{theorem}\label{4construct}
Let $H_1, H_2, \dots , H_t$ be $4$-uniform hypergraphs with orders greater than $4$ such that each $H_i$ is isomorphic to either a complete hypergraph or a complete hypergraph with a single hyperedge removed.  If $t\ge 1$, then  
  $$ gr(H_1, H_2, \dots , H_t, K_5^{(4)}, K_5^{(4)};4)\ge (gr(H_1, H_2, \dots , H_t;4)-1)^2+1.$$
\end{theorem}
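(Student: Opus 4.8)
\section*{Proof proposal}

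The plan is to carry out the construction from the proof of Theorem~\ref{3construct} one uniformity higher. Set $m = gr(H_1,\dots,H_t;4)$ and fix a Gallai $t$-coloring $\mathcal{C}$ of $K_{m-1}^{(4)}$ avoiding a monochromatic copy of $H_i$ in color $i$ for every $i$ (such a $\mathcal{C}$ exists by the definition of $gr$). I would take $m-1$ vertex-disjoint copies $A_1,\dots,A_{m-1}$ of this colored hypergraph, so that their union is $K_{(m-1)^2}^{(4)}$, and label the vertices of $A_i$ by $(1,i),\dots,(m-1,i)$ in such a way that the color of a hyperedge $(j_1,i)(j_2,i)(j_3,i)(j_4,i)$ is independent of $i$; that is, each copy carries $\mathcal{C}$ through its first coordinate.

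Next I would color the hyperedges of $K_{(m-1)^2}^{(4)}$ that do not lie inside a single $A_i$. If $e=(j_1,i_1)(j_2,i_2)(j_3,i_3)(j_4,i_4)$ has $i_1,i_2,i_3,i_4$ pairwise distinct, give $e$ the color $\mathcal{C}(\{i_1,i_2,i_3,i_4\})$, that is, the color of the corresponding hyperedge of a copy read off through the \emph{second} (copy) coordinate. Every other such $e$ realizes one of three ``mixed'' copy-patterns: three of its vertices in one copy and the fourth in another ($3{+}1$), two in each of two copies ($2{+}2$), or two in one copy with the remaining two in two further copies ($2{+}1{+}1$). Such an $e$ would receive one of two new colors $t+1$, $t+2$ by a rule extending rules~(2)--(3) of Theorem~\ref{3construct}: one selects a copy $A_a$ holding at least two vertices of $e$ and orients the choice of $t+1$ versus $t+2$ by comparing the index $a$ with the indices of the copies holding the remaining vertices of $e$, arranging the three patterns so that no $K_5^{(4)}$ all of whose faces receive new colors is monochromatic.

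Granting such a rule, I would verify three things. (i) There is no rainbow $K_5^{(4)}$: for a $5$-set $S$ whose vertices lie in a single copy, or in five distinct copies, the coloring induced on $S$ is a restriction of $\mathcal{C}$ (through the first, respectively second, coordinate) and is therefore Gallai; in every other case at least three of the $\binom{5}{4}=5$ faces of $S$ carry new colors, and since only two new colors exist, two of those faces must agree, so $S$ is not rainbow. (ii) There is no monochromatic $K_5^{(4)}$ in color $t+1$ or $t+2$: the only $5$-sets all of whose faces carry new colors are those whose vertices distribute among copies as $3{+}2$, $3{+}1{+}1$, or $2{+}2{+}1$, and for each of these distributions one checks from the orienting rule that both new colors appear among the five faces. (iii) There is no monochromatic copy of $H_i$ in color $i\le t$: such a copy occupies a set $T$ of $|V(H_i)|\ge 5$ vertices; if $T$ lies in one copy this contradicts the choice of $\mathcal{C}$, and if $T$ lies in distinct copies then the coloring induced on $T$ is isomorphic to $\mathcal{C}$ restricted to the corresponding copy-indices, which has no monochromatic copy of $H_i$; and if $T$ meets at least two copies while some copy holds at least two vertices of $T$, then $T$ contains at least two faces of new color, which is impossible because $H_i$ is a complete hypergraph or a complete hypergraph with one hyperedge removed. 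Together these show the coloring is a Gallai $(t+2)$-coloring of $K_{(m-1)^2}^{(4)}$ with no monochromatic copy of $H_i$ in color $i$ for $i\le t$ and no monochromatic $K_5^{(4)}$ in colors $t+1$ and $t+2$, so $gr(H_1,\dots,H_t,K_5^{(4)},K_5^{(4)};4)>(m-1)^2$, which is the asserted inequality.

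I expect the crux to be step (ii), and precisely the $2{+}1{+}1$ pattern, which has no analogue in the $3$-uniform construction. In Theorem~\ref{3construct} the only $K_4^{(3)}$ all of whose faces are mixed has copy-distribution $2{+}2$; its four faces come in exactly the two flavors ``doubled in $A_a$'' and ``doubled in $A_b$,'' and the orienting rule of Theorem~\ref{3construct} separates them two-and-two automatically. For $r=4$ there are three distributions to be defeated at once, and an offending $K_5^{(4)}$ can contribute as many as three distinct face-flavors rather than two, so the rule orienting the $3{+}1$, $2{+}2$, and $2{+}1{+}1$ patterns must be chosen with care and one must check that a single such rule handles all three distributions simultaneously; this is the part of the argument that genuinely departs from the template of Theorem~\ref{3construct}, while the remaining verifications are straightforward transcriptions of it.
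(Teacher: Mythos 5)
Your proposal has the same skeleton as the paper's proof (blow up the extremal Gallai $t$-coloring of $K_{m-1}^{(4)}$ into $m-1$ copies, read hyperedges with four distinct copy-indices through the copy coordinate, and color the remaining ``mixed'' hyperedges by their distribution pattern), but it has a genuine gap exactly where you flag the crux: the coloring rule for mixed hyperedges is never actually defined, and the family of rules you envision cannot be made to work as described. You propose to send all three mixed patterns ($3{+}1$, $2{+}2$, $2{+}1{+}1$) into the two new colors $t+1,t+2$, oriented by index comparisons. Any pattern-only assignment of the three patterns to two colors must identify two of them, and each such identification produces a monochromatic $K_5^{(4)}$ in a new color: identifying $3{+}1$ with $2{+}2$ kills you on a $5$-set distributed $3{+}2$ (whose five faces are two $3{+}1$'s and three $2{+}2$'s), identifying $3{+}1$ with $2{+}1{+}1$ kills you on a $3{+}1{+}1$ distribution, and identifying $2{+}2$ with $2{+}1{+}1$ kills you on a $2{+}2{+}1$ distribution. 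Index-orientation does not rescue this: the $2{+}2$ pattern is symmetric in its two copies, so a comparison rule assigns it a single color per pair of copies, and the three $2{+}2$ faces of a $3{+}2$ set (as well as the two $3{+}1$ faces, which all have the same triple-copy and are compared against copies on the same side) again collapse. So ``arranging the three patterns so that no $K_5^{(4)}$ is monochromatic'' is not a deferred detail but the obstruction itself.

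The paper's resolution is different in kind and is the idea your proposal is missing: it does \emph{not} put all mixed hyperedges into new colors. It assigns the $2{+}2$ pattern to the \emph{old} color $1$, the $3{+}1$ pattern to $t+1$, and the $2{+}1{+}1$ pattern to $t+2$, with no orientation whatsoever. Then no $5$-set can have all five faces in a single new color (each of the all-mixed distributions $3{+}2$, $3{+}1{+}1$, $2{+}2{+}1$ mixes at least two of the three pattern classes), and the only extra work is to check that the recycled color-$1$ hyperedges never complete a monochromatic $H_1$ --- which holds because every relevant $5$-subset still carries at least two faces in colors $t+1$ or $t+2$, and $H_1$ is complete or complete minus a single hyperedge. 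Your verification steps (i) and (iii) would survive essentially unchanged under this rule, but step (ii) as you set it up cannot be completed without changing the rule in this way (or finding some genuinely non-pattern-based $2$-coloring of the mixed faces, which you have not exhibited).
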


\begin{proof}
 Let $m=gr(H_1, H_2, \dots , H_t;4)$, then there exists a Gallai $t$-coloring of a $K_{m-1}^{(4)}$ that avoids a monochromatic $H_i$ in color $i$ for all $i\in\{1,2,...,t\}$.   We union $m-1$ disjoint copies of this $K_{m-1}^{(4)}$ and label them $A_1,A_2,...,A_m$.   Within each $A_i$, label the vertices $$(1,i),(2,i),...,(m-1,i)$$ so that the color of the hyperedge $(g,i)(j,i)(k,i)(\ell ,i)$ is independent of the choice of $i$.  For a hyperedge $e=abcd$, where not all vertices come from the same same copy of $A_i$, color $e$ according to the following rules.
\begin{enumerate}
    \item Suppose that no two vertices in $e=abcd$ come from the same $A_i$.  Let $a\in V(A_h)$, $b\in V(A_j)$, $c\in V(A_k)$, and $d\in V(A_\ell)$.  Then color $e$ according to the color of the hyperedge $(h,i)(j, i)(k,i)(\ell, i)$ in $A_i$.  
    \item If $a,b\in V(A_j)$ and $c,d\in V(A_k)$ for $j\ne k$, then assign color $1$ to $e$.
    \item If $a,b, c\in V(A_j)$ and $d\in V(A_k),$ for $j\ne k$, then assign color $t+1$ to $e$.
    \item If $a,b\in V(A_j)$, $c\in V(A_k)$, and $d\in V(A_\ell)$ for $j\ne k,\ell$ and $k\ne \ell$, then assign color $t+2$ to $e$.
\end{enumerate}
It is easy to confirm that these rules produce a Gallai $(t+2)$-coloring of $K_{(m-1)^2}^{(4)}$.  We can also confirm that no monochromatic $K_5^{(4)}$ is spanned by hyperedges in colors $t+1$ or $t+2$. 
It remains to be shown that this hypergraph coloring lacks a monochromatic copy of $H_i$ in color $i$, for all $i\in \{1,2,\dots , t\}$.  Let $S$ be a subset of vertices with cardinality $n\ge 5$.  We consider several cases.  \\
\noindent \underline{Case 1:} If all vertices in $S$ come from distinct $A_i$, then the hyperedges in the subhypergraph induced by $S$ are colored according to rule $(1)$ above, preventing a monochromatic copy of $H_i$ from being produced in color $i$ for all $i\in\{ 1,2,\dots , t\}$. \\
\noindent \underline{Case 2:} If $S\subseteq V(A_i)$ for some $i$, then the subhypergraph induced by $S$ does not contain a monochromatic $H_i$ in color $i$, for any $i\in \{1,2,\dots , t\}$, by the assumption made when constructing $A_i$.\\
\noindent \underline{Case 3:} Suppose that $S\subseteq V(A_i)\cup V(A_j)$ with $i\ne j$ and $S\cap V(A_i)\ne \emptyset$ and $S\cap V(A_j)\ne \emptyset$.   We have assumed that $S$ has cardinality at least $5$.  In the case where it is equal to $5$ and  $|S\cap V(A_i)|=2$ and $|S\cap V(A_j)|=3$, we find that the subhypergraph induced by $S$ contains three hyperedges in color $t+1$ and two hyperedges in color $1$, preventing a monochromatic $HJ_i$ in color $i$ for all $i\in \{ 1, 2, \dots , t\}$.  In the case where it is equal to $5$ and $|S\cap V(A_i)|=1$ and $|S\cap V(A_j)|=4$, four hyperedges receive color $t+1$ and one hyperedge has a color from $\{1,2,\dots , t\}$.  Again, no monochromatic copy of $H_i$ exists in color $i$ for any $i\in \{1,2,\dots, t\}$. \\
\noindent \underline{Case 4:} Suppose that $S$ contains vertices from at least $3$ distinct $A_i$, one of which contains at least two vertices from $S$ (so that we are not in Case 1).  In the case where $a,b,c,d,e \in S$ satisfy $a,b\in A_i$, $c,d\in A_j$, and $e\in A_k$, where no two of $i,j,k$ are equal, one hyperedge has color $1$ and four hyperedges have color $t+2$. There is no $K_5^{(4)}$ in color $t+2$ or a monochromatic $H_i$ in color $i$ for  any $i\in \{1,2,\dots ,t\}$.    In the case where $a,b,c,d,e \in S$ satisfy $a,b,c\in A_i$, $d\in A_j$, and $e\in A_k$, there are three hyperedges in color $t+2$ and two hyperedges in color $t+1$, again, preventing the appropriate subhypergraphs. \\
\noindent \underline{Case 5:} Suppose that $S$ contains vertices from at least $4$ distinct $A_i$, but not five.  Without loss of generality, assume that $a,b\in V(A_i)$, $c\in V(A_j)$, $d\in V(A_k)$, and $e\in V(A_\ell)$.  Then there are two hyperedges in colors $\{1,2,\dots , t\}$ and three hyperedges in color $t+2$, preventing the appropriate subhypergraphs.  \\
\noindent This completes the proof.
\end{proof}



\section{Explicit Lower Bounds and Future Directions} \label{conclude}

We conclude by explicitly stating the best known $3$ and $4$-uniform lower bounds implied by the constructions contained in this paper.  While we have focused on Gallai-Ramsey numbers, it should be pointed out that Theorems \ref{genburr}, \ref{3construct}, and \ref{4construct} still hold when the Gallai-Ramsey number is replaced by the corresponding Ramsey number in the theorems' statements.  The proofs are the same, but do not require the extra step of showing that Gallai colorings are preserved.  This is particularly useful when we apply the Ramsey number-version of Theorem \ref{3construct} to the known lower bound $R^4(K_5^{(3)};3)\ge 131,073$ (see \cite{BHR}) to obtain $$R(K_4^{(3)}, K_4^{(3)}, K_5 ^{(3)}, K_5 ^{(3)}, K_5 ^{(3)}, K_5 ^{(3)}; 3)\ge 17,179,869,185.$$

Since $2$ and $3$-color Gallai-Ramsey numbers correspond with Ramsey numbers in the hypergraph setting, we can build off of known lower bounds for the corresponding Ramsey numbers, which can be found in Section 7.1 of Radziszowski's dynamic survey \cite{Rad}.  In Figure 3 of \cite{M}, two examples are provided of $2$-colorings of $K_{13}^{(3)}$ that lack a monochromatic $K_5^{(3)}-e$.  It follows that $$R^2(K_5^{(3)}-e;3)\ge 14.$$  Applying Theorem \ref{3construct}, we find that $$gr(K_4^{(3)}, K_4^{(3)}, K_5^{(3)}-e, K_5^{(3)}-e;3)\ge 170,$$ which surpasses the bounds implied by Theorems \ref{lift} and \ref{CandG}. 

From \cite{Rad}, the following lower bounds for $3$-uniform hypergraph Ramsey numbers are known.
\begin{align} R^2 (K_4^{(3)}-e;3)&=7 \quad \mbox{\ \cite{Rad}} \label{4-e,4-e} \\ R(K_4^{(3)}-e, K_4^{(3)};3)&=8 \quad \mbox{\ \cite{Sob}} \label{4-e,4}  \\R^3(K_4^{(3)}-e;3)&\ge 13 \quad \mbox{\cite{Ex1}} \label{3K4-e}\\ R^2(K_4^{(3)};3)&=13  \quad \mbox{\cite{MR}} \label{4,4} \\ R(K_4^{(3)}, K_5^{(3)};3)&\ge 35 \quad \mbox{\cite{Dyb2}} \label{4,5} \\ R(K_4^{(3)}, K_6^{(3)};3)&\ge 58 \quad \mbox{\cite{Ex8}} \label{4,6} \\ R^2(K_5^{(3)};3)&\ge 82 \quad \mbox{\cite{Ex8}} \label{5,5} \\ R^3(K_4^{(3)};3)&\ge 56 \quad \mbox{\cite{Ex8}} \label{4,4,4} \\ R^3(K_5^{(3)};3)&\ge 163 \ \  \mbox{\cite{BHR}} \label{5,5,5}  \end{align} 
Our results then imply the lower bounds given in Figure \ref{3uni}.
\begin{figure}
\begin{align} \mbox{Theorem \ref{3construct} and (\ref{4-e,4-e})} \quad &\Longrightarrow \quad gr(K_4^{(3)}-e, K_4^{(3)}-e, K_4^{(3)}, K_4^{(3)};3)\ge 37 \notag \\
\mbox{Theorem \ref{3construct} and (\ref{4-e,4})} \quad &\Longrightarrow \quad gr(K_4^{(3)}-e, K_4^{(3)}, K_4^{(3)}, K_4^{(3)};3)\ge 50 \notag \\
\mbox{Theorem \ref{3construct} and (\ref{3K4-e})} \quad &\Longrightarrow \quad gr(K_4^{(3)}-e, K_4^{(3)}-e, K_4^{(3)}-e, K_4^{(3)}, K_4^{(3)};3)\ge 145 \notag \\
\mbox{Theorem \ref{3construct} and (\ref{4,4})} \quad &\Longrightarrow \quad gr^4(K_4^{(3)};3)\ge 145 \notag \\
\mbox{Theorem \ref{3construct} and (\ref{4,5})} \quad &\Longrightarrow \quad gr(K_4^{(3)}, K_4^{(3)}, K_4^{(3)}, K_5^{(3)};3)\ge 1,157 \notag \\
\mbox{Corollary \ref{ExooRad} and (\ref{4,5})} \quad &\Longrightarrow \quad gr(K_4^{(3)}, K_5^{(3)}, K_5^{(3)}, K_{7}^{(3)};3)\ge 1,157 \notag \\
\mbox{Corollary \ref{ExooRad}, (\ref{4,5}), and (\ref{5,5})} \quad &\Longrightarrow \quad gr(K_4^{(3)}, K_5^{(3)}, K_5^{(3)}, K_{9}^{(3)};3)\ge 2,755 \notag \\
\mbox{Corollary \ref{ExooRad}, (\ref{4,5}), and (\ref{5,5})} \quad &\Longrightarrow \quad gr(K_5^{(3)}, K_5^{(3)}, K_5^{(3)}, K_{7}^{(3)};3)\ge 2,755 \notag
\\ \mbox{Theorem \ref{3construct} and (\ref{4,4,4})} \quad &\Longrightarrow \quad gr^5(K_4^{(3)};3)\ge 3,026 \notag 
\\ \mbox{Corollary \ref{ExooRad} and (\ref{4,6})} \quad &\Longrightarrow \quad gr(K_4^{(3)}, K_6^{(3)}, K_6^{(3)}, K_{7}^{(3)};3)\ge 3,250 \notag 
\\ \mbox{Corollary \ref{ExooRad} and (\ref{4,6})} \quad &\Longrightarrow \quad gr(K_4^{(3)}, K_4^{(3)}, K_6^{(3)}, K_{11}^{(3)};3)\ge 3,250 \notag 
\\ \mbox{Corollary \ref{ExooRad}, (\ref{4,6}), and (\ref{5,5})} \quad &\Longrightarrow \quad gr(K_4^{(3)}, K_5^{(3)}, K_5^{(3)}, K_{11}^{(3)};3)\ge 4,618 \notag
\\ \mbox{Corollary \ref{ExooRad}, (\ref{4,6}), and (\ref{5,5})} \quad &\Longrightarrow \quad gr(K_4^{(3)}, K_5^{(3)}, K_6^{(3)}, K_{7}^{(3)};3)\ge 4,618 \notag
\\ \mbox{Corollary \ref{ExooRad}, (\ref{4,6}), and (\ref{5,5})} \quad &\Longrightarrow \quad gr(K_5^{(3)}, K_5^{(3)}, K_6^{(3)}, K_{7}^{(3)};3)\ge 4,618 \notag
\\ \mbox{Corollary \ref{ExooRad} and (\ref{5,5})} \quad &\Longrightarrow \quad gr(K_5^{(3)}, K_5^{(3)}, K_5^{(3)}, K_{9}^{(3)};3)\ge 6,565 \notag 
\\ \mbox{Theorem \ref{3construct} and (\ref{5,5,5})} \quad &\Longrightarrow \quad gr(K_5^{(3)},K_5^{(3)},K_5^{(3)},K_4^{(3)},K_4^{(3)};3)\ge 26,245 \notag
\end{align} \caption{Lower bounds for some $3$-uniform hypergraph Gallai-Ramsey numbers.}\label{3uni}\end{figure}
Of course, one can obtain other $t$-color Gallai-Ramsey number lower bounds by successively applying the above results with Corollary \ref{ExooRad} and Theorem \ref{3construct}.  

In the $4$-uniform case, it is known that \begin{equation} R^2(K_5^{(4)};4)\ge 34 \quad \mbox{\cite{Ex8}}.\label{5,5;4} \end{equation}  Combining this bound with Theorem \ref{4construct} implies the following:  $$gr^4(K_5^{(4)};4)\ge 1,090.$$  In \cite{SonYL}, it was shown that $$R(K_{p}^{(4)}, K_{q}^{(4)};4)\ge 2(R(K_{p-1}^{(4)}, K_q^{(4)};4))-1$$ for $p,q\ge 5$.  Applying this inequality to (\ref{5,5;4}), it follows that $$R(K_5^{(4)}, K_6^{(4)};4)\ge 67,$$ which combined with Theorem \ref{4construct} gives $$gr(K_5^{(4)}, K_5^{(4)}, K_5^{(4)}, K_6^{(4)};4)\ge 4,357.$$

Lastly, Theorems \ref{3construct} and \ref{4construct} taken together provide evidence in support of the following conjecture.  The difficulty in proving this conjecture follows from the number of cases that arise as the uniformity increases.

\begin{conjecture}\label{rconstruct}
Let $H_1, H_2, \dots , H_t$ be $r$-uniform hypergraphs, $r>2$,  with orders greater than $r$ such that each $H_i$ is isomorphic to either a complete hypergraph or a complete hypergraph with a single hyperedge removed.  If $t\ge 1$, then  
  $$ gr(H_1, H_2, \dots , H_t, K_{r+1}^{(r)}, K_{r+1}^{(r)};r)\ge (gr(H_1, H_2, \dots , H_t;r)-1)^2+1.$$

\end{conjecture}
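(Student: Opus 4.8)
The plan is to mimic the constructions of Theorems \ref{3construct} and \ref{4construct}, replacing the finitely many ad hoc coloring rules by a single rule depending on the ``type'' of a hyperedge relative to the partition into copies. Set $m = gr(H_1, \dots, H_t; r)$ and fix a Gallai $t$-coloring of $K_{m-1}^{(r)}$ avoiding a monochromatic $H_i$ in color $i$. Take $m-1$ disjoint labeled copies $A_1, \dots, A_{m-1}$ of this colored $K_{m-1}^{(r)}$, with the vertices of each $A_i$ indexed so that the induced coloring on any $r$ vertices with distinct first coordinates agrees across all choices of the ``second-coordinate'' copy. For a hyperedge $e$ spanning more than one copy, let its \emph{profile} be the partition of $r$ recording how many of its vertices lie in each copy it meets (so for $r=3$ the relevant profiles are $2{+}1$, and for $r=4$ they are $2{+}2$, $3{+}1$, and $2{+}1{+}1$). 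The key idea is to assign colors to non-``transversal'' hyperedges using only two new colors, $t+1$ and $t+2$, according to the profile together with an ordering comparison of the copies involved — exactly as in the two proven cases, where the ordered ``split'' partitions get split between the two new colors.

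The first step is to make the color-assignment rule precise and uniform in $r$. The natural choice: a hyperedge whose vertices all lie in distinct copies is colored by rule (1) (the coloring of $K_{m-1}^{(r)}$ on the first coordinates); every other hyperedge is colored $t+1$ or $t+2$ according to some fixed function of (profile, ordering of the copies it meets). The second step is to verify the $(t+2)$-coloring is Gallai: given $r+1$ vertices, if their first coordinates are all distinct the induced coloring is a sub-coloring of the Gallai coloring $\mathcal{C}_1$ and hence has no rainbow $K_{r+1}^{(r)}$; if some but not all first coordinates coincide, I must show at least two of the $r+1$ induced hyperedges receive colors from $\{t+1, t+2\}$ with a repeat — this requires a counting argument showing that among the hyperedges missing the ``distinguished'' vertex structure, two share a color; if all first coordinates coincide, the induced coloring is a sub-coloring of $\mathcal{C}_2$, again Gallai. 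The third step is to rule out monochromatic $H_i$ in colors $1, \dots, t$ and monochromatic $K_{r+1}^{(r)}$ in colors $t+1, t+2$: for any vertex set $S$ with $|S| \ge r$ that is not fully transversal, some ``small'' copy contributes $\ge 2$ vertices, say $a, b \in A_j$, and then every $r$-subset of $S$ containing $\{a,b\}$ is a non-transversal hyperedge, so it gets color $t+1$ or $t+2$, never color $i \le t$; and a separate profile-counting argument (as in Case 3 of the two proven theorems) shows no $K_{r+1}^{(r)}$ is monochromatic in $t+1$ or $t+2$.

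The main obstacle — and the reason this is stated as a conjecture rather than a theorem — is the second and third steps in full generality: one must exhibit a single assignment of non-transversal hyperedges to two colors $\{t+1, t+2\}$ such that (a) every $(r+1)$-set with a coincidence among first coordinates induces at least one repeated color, \emph{and} (b) no $(r+1)$-set is monochromatic in color $t+1$ or $t+2$. For fixed small $r$ one checks this by enumerating the $O(p(r))$ profiles and the orderings within each, but as $r$ grows the number of profiles (partitions of $r$) grows, and it is not obvious a priori that two colors always suffice to simultaneously satisfy (a) and (b); the ordered-split trick that works for $r=3,4$ needs to be replaced by a genuinely uniform argument, perhaps by always assigning a non-transversal hyperedge to $t+1$ or $t+2$ according to the parity of the position of its lexicographically least ``heavy'' copy, and then proving the two required properties hold for that rule. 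Establishing such a uniform rule, and the accompanying counting lemmas, is exactly the content that would upgrade the conjecture to a theorem.
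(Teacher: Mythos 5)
This statement is left as a conjecture in the paper -- no proof is given, and the authors explicitly attribute the difficulty to ``the number of cases that arise as the uniformity increases.'' Your proposal does not close that gap either: you candidly defer the decisive step, namely exhibiting a coloring rule for the non-transversal hyperedges that simultaneously (a) forces a repeated color in every $K_{r+1}^{(r)}$ whose vertices have a coincidence among first coordinates and (b) admits no monochromatic $K_{r+1}^{(r)}$ in the new colors, while also not creating a monochromatic $H_i$ in an old color. So what you have is a correct identification of the program and of exactly the obstruction that the authors themselves cite, not a proof.

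One concrete point where your sketch is already misaligned with the proven cases: you assert that the non-transversal hyperedges are handled ``using only two new colors, $t+1$ and $t+2$,'' but in the proof of Theorem \ref{4construct} the $2{+}2$ profile is assigned the \emph{old} color $1$, and this is forced. Indeed, a $5$-set distributed $3{+}1{+}1$ across three copies induces two hyperedges of profile $3{+}1$ and three of profile $2{+}1{+}1$, all five sharing the same heavy copy and the same set of copies met, so any rule depending only on the profile and the copies involved must give the $2{+}1{+}1$ hyperedges a single color distinct from that of the $3{+}1$ hyperedges; a $5$-set distributed $2{+}2{+}1$ then induces four $2{+}1{+}1$ hyperedges and one $2{+}2$ hyperedge, and a $5$-set distributed $2{+}3$ induces three $3{+}1$ and two $2{+}2$ hyperedges, and chasing these constraints shows two new colors assigned by profile alone cannot avoid a monochromatic $K_5^{(4)}$. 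Reusing an old color, as the paper does, introduces the further obligation of checking that the cross hyperedges in that color cannot complete a copy of $H_1$, which your step (3) does not address. Any genuine proof of Conjecture \ref{rconstruct} must specify, for each of the (growing number of) partitions of $r$ with at least two parts and a part of size at least two, whether it receives a new or an old color, and verify all three conditions uniformly in $r$; that is precisely the missing content.
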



\bibliographystyle{amsplain}

\end{document}